\begin{document}   

\maketitle

\begin{abstract}
Although logical consistency is desirable in scientific research,  standard statistical hypothesis tests are typically logically inconsistent. 
In order to address this issue, 
previous work introduced agnostic hypothesis tests and proved that they 
can be logically consistent while retaining statistical optimality properties. This paper
characterizes the credal modalities in agnostic hypothesis tests and uses
the hexagon of oppositions to explain 
the logical relations between these modalities. Geometric solids that are composed of hexagons of oppositions
illustrate the conditions for 
these modalities to be 
logically consistent.
Prisms composed of hexagons of oppositions show how the credal modalities obtained from two agnostic tests vary according to their threshold values. Nested hexagons of oppositions summarize logical relations between the credal modalities in these tests and
prove new relations.
\end{abstract}

{\bf Keywords:} statistical hypothesis tests, hexagon of oppositions, logical consistency, agnostic hypothesis tests, probabilistic and alethic modalities.   

{\bf AMS Classification:} 03B42, 03B45, 62A01, 62C10, 62F05.

\section{Introduction}

Logical consistency is desirable in scientific research. 
It can be hard to interpret inconsistent conclusions and
to develop theories that explain them.
As a result, they are detrimental to 
scientific communication and decision-making.

Despite the desirability of logical consistency,
it is not satisfied by many methods of statistical hypothesis testing.
For instance, consider that a researcher simultaneously tests two hypotheses, $A$ and $B$, 
such that ``not $B$'' implies ``not $A$''.
In such a situation, one expects that the rejection of $B$ would imply the rejection of $A$.
However, this property does not apply to standard tests,
such as the ones based on
the p-value or the Bayes factor \citep{Schervish1996, Schervish1999, Fossaluza2016}.

Besides this flaw, 
one can find other 
logical inconsistencies in 
standard hypothesis testing procedures.
\citet{Izbicki2015,Silva2015} classify
the types of logical inconsistencies in
these procedures into failures to
satisfy at least one of four of
conditions of classical logics.
If all conditions are satisfied by
a test, then it is defined as 
logically consistent.
This use of the term contrasts with
the one in the literature of 
non-classical logics, in which
(in)consistency often refers to 
forms of modifying classical notions of contradiction, negation, conflation, etc. so as to allow logical systems with
greater flexibility than classical logic \citep{Carnielli2007,Carnielli2016,Stern2004}.
\citet{Izbicki2015} shows that 
a standard test is
logically consistent if and only if 
it is based on point estimation,
which generally does not satisfy statistical optimality. 

This grim result motivated \citet{Esteves2016}
to investigate \textit{agnostic hypothesis tests}. Such a test is a
function, $\mathcal{L}$, that assigns to
each hypothesis, $H$, a value in 
$\{0,0.5,1\}$. $H$ is accepted when
$\mathcal{L}(H)=0$, $H$ is rejected when
$\mathcal{L}(H)=1$ and the test 
noncomitally neither accepts nor 
rejects $H$ when $\mathcal{L}(H)=0.5$.
In this paper, 
we use agnostic tests to 
define \textit{credal modalities},
as described in
\Cref{table:modalities}.

\begin{table}
 \centering
 \resizebox{\textwidth}{!}{
 \begin{tabular}{|c|l|l|c|l|}
  \hline
  Modality 
  & Definition 
  & Name 
  & Equivalence 
  & Interpretation \\
  \hline
  $\square H$ 
  & $\mathcal{L}(H) = 0$ 
  & Necessity (A) 
  & $\Delta H \wedge \Diamond H$ 
  & $H$ is accepted \\
  $\lnot \Diamond H$ 
  & $\mathcal{L}(H) = 1$ 
  & Impossibility (E) 
  & $\Delta H \wedge \lnot \square H$ 
  & $H$ is rejected \\
  $\nabla H$ 
  & $\mathcal{L}(H) = 0.5$
  & Contingency (Y)
  & $\Diamond H \wedge \lnot \square H$ 
  & $H$ not decided \\
  $\Diamond H$ 
  & $\mathcal{L}(H) < 1$
  & Possibility (I) 
  & $\square H \vee \nabla H$ 
  & $H$ not rejected \\
  $\lnot \square H$ 
  & $\mathcal{L}(H) > 0$ 
  & Non-necessity (O) 
  & $\lnot \Diamond H \vee \nabla H$ 
  & $H$ not accepted \\
  $\Delta H$ 
  & $\mathcal{L}(H) \neq 0.5$ 
  & Non-contingency (U) 
  & $\square H \vee \lnot \Diamond H$ 
  & $H$ is decided \\
  \hline
 \end{tabular} 
 }
 \caption{Modalities of 
 agnostic hypothesis tests.}
 \label{table:modalities}
\end{table}

Using this framework, 
\citet{Esteves2016} shows that the agnostic generalizations
of some standard hypothesis tests can achieve logical consistency.
Indeed, a hypothesis test is logically consistent if and only if 
it is based on a region estimator.
As a result, there exist agnostic hypothesis tests that
are both logically consistent and statistically optimal.

\begin{figure}
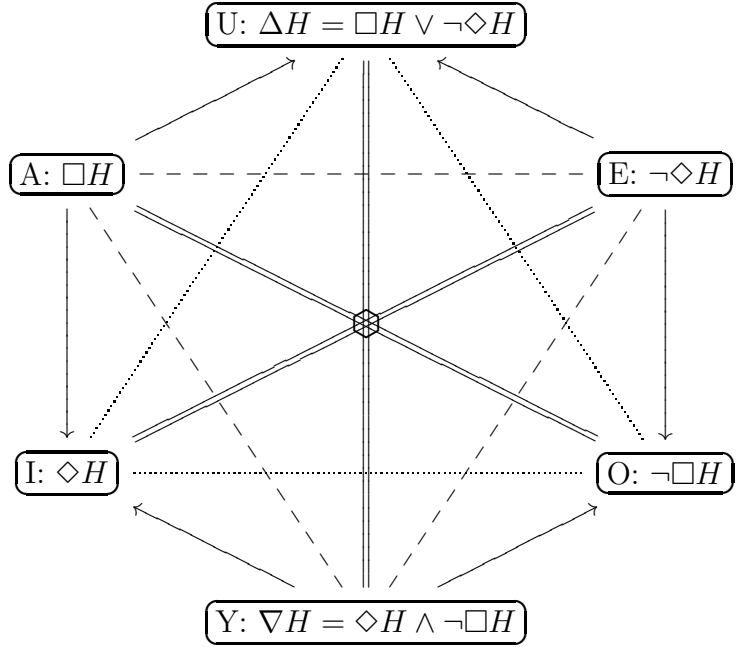

 \centering
 \hexagono
 \caption{The hexagon of oppositions for an agnostic hypothesis test.}
 \label{figure:hexa-agnostic}
\end{figure}

This positive result motivates 
the development of schematic diagrams for
explaining agnostic tests and
their logical properties.
The relations between the 
credal modalities in 
\Cref{table:modalities} can 
be represented by 
the hexagon of oppositions
\citep{Blanche1953, Blanche1981}.
It follows from the definition 
in \Cref{table:modalities} that
these credal modalities are defined by 
opposition relations in 
well ordered structures, 
such as the ones in 
\citet{Beziau2012, Beziau2015}. 
Although this paper considers solely total orders, extensions of 
the hexagon of oppositions
to partial orders can be found in
\citet{Demey2016}. 
    
 \Cref{figure:hexa-agnostic} represents
the standard \emph{hexagon of oppositions}.
Each credal modality is associated with 
one vertex in the logical hexagon, namely:
A (necessity), E (impossibility),
I (possibility), O (non-necessity),
U (non-contingency) and
Y (contingency).\footnote{ 
 The four vowels used to label the  vertices of the traditional square of opposition are chosen from the Latin word \textit{AfIrmo}, for affirming universal (necessary) and particular (possible) modal propositions,  and \textit{nEgO} for negating them. 
\cite{Blanche1953} extended the square into the hexagon of opposition, using the two remaining French vowels to label the top and bottom of its six vertices.  
 Some confusion comes from the historical use of four vowels (A, E, I and U) to describe which part(s) of a modal  proposition (modus and dictum) are  negated. 
 This description follows the mnemonic rule: \textit{nihil A, E dictum negat, Ique modum,  U totum}, meaning: 
  A- neither modus nor dictum is negated, 
  E- dictum is negated
  I- modus is negated, 
  U- modus and dictum are negated.    
 Finally, each corner of the square of opposition is traditionally labeled by an anagram describing the (four) equivalent modal propositions obtained by using, respectively, the modality of --  necessary, impossible, contingent and possible.   
  Mnemonic anagrams are provided by the Latin words:  
 (A) Purpurea - purple,      
 (E) Iliace - colicky,   
 (I) Amabimus - lovely, and     
 (O) Edentuli - no-tooth; see 
 \cite[v.II, p.109-111]{Dumitriu1977}.  
} 
In \Cref{figure:hexa-agnostic}, logical relations between modalities 
are represented as edges, which are classified according to 
the following types:

\begin{itemize}
 \item arrows: indicate \textit{logical implications}, that is,
 if modality $J$ points to $K$ and $J$ is obtained,
 then $K$ is also obtained.

 \item dashed-lines: indicate \textit{contrariety}, that is,
 modalities connected by these lines cannot both be true.

 \item dotted-lines: indicate \textit{sub-contrariety}, that is,
 modalities connected by these lines cannot both be false. 
  
 \item double-lines: indicate \textit{contradictions}, that is,
 modalities connected by these lines cannot either 
 be both true or be both false.
\end{itemize}

This paper characterizes
the credal modalities from the
statistical theory developed in
\citet{Esteves2016} and
uses the hexagon of oppositions
to explain the logical properties that
arise in these modalities.
\Cref{section:properties} uses
polyhedra based on
the hexagon of oppositions
to present the four conditions for 
a test to be logically consistent.
Next, hexagonal prisms illustrate the
construction of the modalities
that arise from standard hypothesis tests (\Cref{section:standard-tests}) and that arise from a logically
consistent agnostic test (\Cref{section:region-tests}).
\Cref{section:hybrid} uses a nested hexagon of oppositions to discuss the logical relations between the modalities obtained in
\Cref{section:standard-tests,section:region-tests}.

Note that \Cref{section:standard-tests,section:region-tests} examine
two different types of credal modalities. 
In order to avoid ambiguity, the credal modality defined in 
each section is represented by a different symbol.
\Cref{section:standard-tests} defines
credal modalities based on threshold values on 
a \textit{probability measure} over the space of hypotheses.
Since probability is 
an additive measure, 
the corresponding 
\textit{probabilistic modalities} 
are represented by superposing the 
plus sign to the 
standard modal symbols  
($\sqb$, $\dib$, $\nbb$ and $\dlb$). 
In contrast, 
Section \ref{section:region-tests}
defines credal modalities
based on threshold values on a
\textit{possibilistic measure}
over the space of hypotheses. 
 The corresponding \textit{alethic modalities} are denoted by the standard modal symbols 
($\square$, $\Diamond$, $\nabla$ and $\Delta$).  
For a detailed explanation of probability and possibility measures see
\citet{Dubois1982,Borges2007,Stern2014}.

\section{Logical conditions on agnostic tests}
\label{section:properties}

In order to discuss the
logical conditions on statistical tests,
it is necessary to present some definitions.
A \textit{parameter} is an unknown or unobservable quantity
with possible values in an arbitrary set, $\Theta$,
named the \textit{parameter space}.
A \textit{statistical hypothesis}, $H$, is a statement about 
a parameter. It is of the form 
$H: \theta \in \Theta_H$,
where $\Theta_H$ is an element of
an arbitrary $\sigma$-field over $\Theta$.
For example, 
the parameter space could be 
$\Theta = \mathbb{R}^{d}$, $d \in \mathbb{N}^{*}$, and 
a hypothesis could be 
stated as a linear equation, $H: A\theta =b$, 
corresponding to the hypothesis set 
$\Theta_H = \{\theta \in \mathbb{R}^{d} \g A\theta =b \}$.
Whenever there is no ambiguity,
the symbol $H$ is used as 
a shortcut for $\Theta_H$,
although the former is a statement and
the latter is a set.
A \textit{statistical hypothesis test} is a
 function that attributes credal modalities
to each statistical hypothesis.

\citet{Esteves2016} introduces 
four logical consistency conditions
for statistical hypothesis tests: 
invertibility, monotonicity, union consonance and 
intersection consonance.
In the following, these properties are represented using geometric solids composed of hexagons of oppositions.

\subsection{Invertibility}

\textit{Invertibility} restrains the conclusions that can be obtained
when simultaneously testing a hypothesis, $H$, and its set complement, $\bH = \Theta - H$. 
If invertibility holds, then 
either both hypotheses are undecided or
one is accepted and the other is rejected.
This restriction is represented symbolically in \Cref{defn:invertibility}:

\begin{definition}[Invertibility]
 \label{defn:invertibility} 
 $$(\square H  \iff  \lnot \Diamond \bH) 
   \ \ \wedge \ \  (\nabla H  \iff   \nabla  \bH)$$
\end{definition} 
 
\Cref{fig::invertibility} uses two hexagons of opposition
to represent \Cref{defn:invertibility}.
The hexagons for $H$ (below) and $\bH$ (above) are 
joined by double-arrowed edges to form a hexagonal prism.
These edges illustrate that, if invertibility holds,
then the connected modalities necessarily occur simultaneously.
The modalities in the $\bH$ hexagon are inverted
in relation to the modalities in the $H$ hexagon.
As a result, if invertibility holds, then
one obtains the following logical equivalences:
$A \leftrightarrow \mbox{\bE}$, 
$E \leftrightarrow \mbox{\bA}$, 
$I \leftrightarrow \mbox{\bO}$, 
$O \leftrightarrow \mbox{\bI}$, 
$U \leftrightarrow \mbox{\bU}$,   
$Y  \leftrightarrow \mbox{\bY}$. 

\begin{figure}
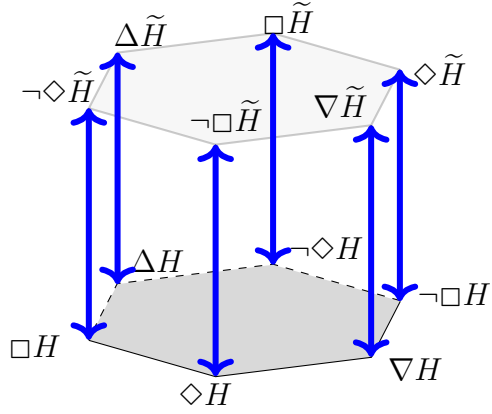

\centering
\invertibility
\caption{A representation of invertibility using two hexagons of oppositions.
         $\bH$ denotes the negation of a hypothesis $H$.
		 Two modalities are connected by a bidirectional arrow if 
		 each one implies and is implied by the other.}
\label{fig::invertibility}
\end{figure}

\subsection{Monotonicity}

\textit{Monotonicity} restrains the conclusions that can be obtained when
simultaneously testing nested hypotheses.
Two hypotheses, $H$ and $H'$, are nested if
$H$ implies $H'$ ($H \subseteq H'$).
If monotonicity holds, then the conclusion obtained for $H'$ is
always at least as favorable as the conclusion obtained for $H$.
That is, if $H$ is accepted, then $H'$ is accepted and, 
if $H$ is possible, then $H'$ is possible.
This restriction is represented symbolically in \Cref{defn:monotonicity}.

\begin{definition}[Monotonicity]
 \label{defn:monotonicity}  
 \begin{align*} 
  H \subseteq H' \Rightarrow 
  \begin{cases}
   \square H  \Rightarrow  \square H'  \\
   \Diamond H  \Rightarrow  \Diamond H'
  \end{cases}
 \end{align*}
\end{definition}	

\Cref{fig::monot} represents monotonicity through
the hexagon of oppositions.
The hexagon for $H'$ is larger than the one for $H$
as a means to illustrate that $H \subseteq H'$.
These hexagons are connected by additional edges to 
form a pyramidal frustum.
Each horizontal cut of this solid can be thought to represent a new hypothesis.
These hypotheses imply the ones below them and are implied by the ones above them.
The arrows that go from the top of the frustum to its bottom represent that,
if a hypothesis is below another, 
then the conclusion obtained for the lower one
should be at least as favorable as the one
obtained for the upper one.
The arrows that go in the opposite direction are 
obtained by taking the negation of 
the previous implications.
As a result of monotonicity, one obtains the following
logical implications:    
$\mbox{A} \rightarrow \mbox{A'}$, 
$\mbox{I} \rightarrow \mbox{I'}$, 
$\mbox{O'}\rightarrow \mbox{O}$, 
$\mbox{E'}\rightarrow \mbox{E}$. 

\begin{figure}
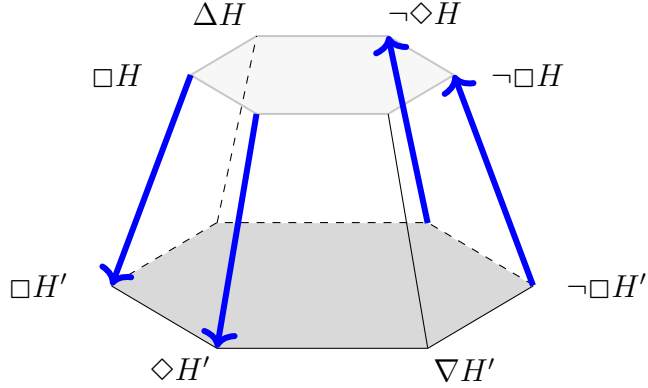

 \centering
 \monotonicity
 \caption{Logical implications induced by monotonicity (blue arrows). The figure displays
stacked hexagons of statistical modalities for a sequence of nested hypotheses. Each horizontal cut represents a different hypothesis; in the figure we display two of them: $H \subseteq H'$.  }
 \label{fig::monot}
\end{figure}

\subsection{Consonance}

\textit{Consonance} restricts the conclusions that one can obtain
when simultaneously testing a set of hypotheses and 
their unions or intersections.
Let $\{ H_{i} \}_{i \in I}$ be an arbitrary collection of hypotheses.
Under strong union consonance, 
if one concludes that $\cup_{i \in I}{H_{i}}$ is possible,
then at least one $H_{i}$ is possible.
Similarly, under strong intersection consonance,
if one concludes that $\cap_{i \in I}{H_{i}}$ is not necessary,
then at least one $H_{i}$ is not necessary.
These restrictions are represented symbolically in\Cref{defn:union_consonance,defn:inter_consonance}.

\begin{definition}[Strong union consonance]
 \label{defn:union_consonance}
 For every $I$,
 \begin{align*}
  \Diamond  ( \cup_{i \in I} H_i)  \rightarrow
  \exists i \in I \mbox{ s.t. } \Diamond  H_i.
 \end{align*}
\end{definition}
Strong union consonance is equivalent to
\begin{align*}
 \forall i \in I, \lnot \Diamond H_i \Rightarrow
 \lnot \Diamond (\cup_{i \in I}H_i)
\end{align*}
that is, the $\lnot \Diamond$-modality is
closed under the union.

\begin{definition}[Strong intersection consonance]
 \label{defn:inter_consonance}
 For every $I$,
 \begin{align*}
  \lnot \square  (\cap_{i \in I} H_i) \Rightarrow
  \exists i \in I \mbox{ s.t. } \lnot \square  H_i.
 \end{align*}
\end{definition}
Strong intersection consonance is
equivalent to
\begin{align*}
 \forall i \in I, \square H_i \Rightarrow 
 \square ( \cap_{i \in I} H_i),
\end{align*}
that is, the $\square$-modality is 
closed under the intersection.
 
\Cref{fig:consonance} illustrates\Cref{defn:union_consonance,defn:inter_consonance}.
For union consonance, consider a triangle in which each vertex represents the possibility of a single hypothesis.
If one obtains the center of the triangle and the union of all hypotheses is possible,
then one also obtains at least one of the midpoints of the edge, representing the possibilities of pairwise unions, and
at least one of the vertices connected to this edge.
\Cref{fig:consonance} also illustrates the symmetry between
strong union consonance and strong intersection consonance.
Indeed, if invertibility holds,
then these types of consonance are equivalent.

\begin{figure}
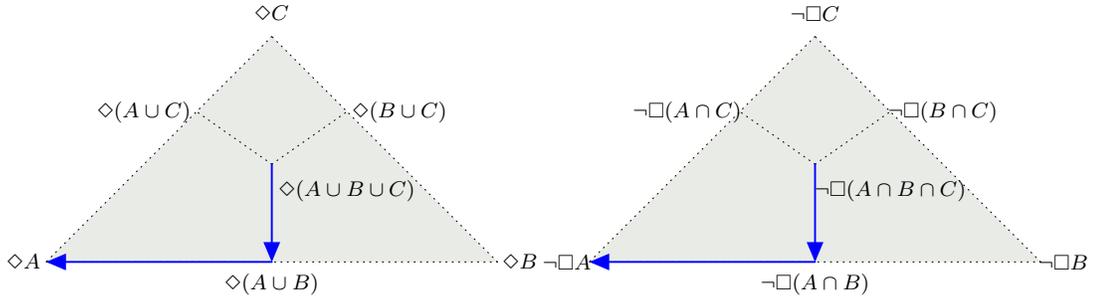

 \centering
 \begin{subfigure}{.52\textwidth}
  \unionconsonance
 \end{subfigure}%
 \begin{subfigure}{.52\textwidth}
  \interconsonance
 \end{subfigure}
 \caption{Logical implications induced by strong union consonance (left) and strong intersection consonance (right). If one obtains the conclusion in the center of the triangle, then one obtains at least one of the conclusions in the middle of the edges of the triangle and another in the vertices connected to this edge. That is, there exists at least one path such as the one illustrated by the blue arrows.}
 \label{fig:consonance}
\end{figure}

\subsection{The logic of credal modalities in logically consistent hypothesis tests}

A hypothesis test is \textit{logically consistent} if it satisfies
invertibility, monotonicity, union and intersection consonance, and
$\square \Theta$ is obtained.
Logically consistent tests have properties that improve
their computation and interpretation.
For example, it follows from 
invertibility and \Cref{table:modalities} that,
for every hypothesis $H$,
$\square H$ and $\square \bH $ cannot be
simultaneously obtained.
That is, a logically consistent hypothesis test
admits no contradictions.

Logically consistent tests also allow
the use of classical logic to
obtain deductions over decided hypotheses.
In order to show this property, 
we use the functionally complete ``nand'' 
operator ($\uparrow$), either as defined over  truth values or as defined over hypotheses (sets). 
Over hypotheses, 
$H_1 \uparrow H_2 := \Theta - (H_1 \cap H_2)$.
It follows that, for example, 
$H_1 \wedge H_2 = H_1 \cap H_2$,
$H_1 \vee H_2 = H_1 \cup H_2$ and 
$\neg H_1 = \Theta - H_1$.
Over truth values, 
$\square H_1 \uparrow \square H_2$ is obtained if
and only if either $\square H_1$ is not obtained or
$\square H_2$ is not obtained.
\Cref{lemma:logical} in the appendix shows that,
if a test is logically consistent and
$H_1$ and $H_2$ are decided 
($\Delta H_1$ and $\Delta H_2$ are obtained), then
$\square(H_1 \uparrow H_2)$ is obtained if and only if
$\square H_1 \uparrow \square H_2$ is obtained.
In other words, if a test is logically consistent and
$H_1,\ldots,H_n$ are decided, then, 
for every logical proposition, $P$,
$\square P(H_1,\ldots,H_n)$ is obtained if and only if
$P(\square H_1,\ldots, \square H_n)$ is obtained.
Once one has decided the credal modality of
$H_1,\ldots, H_n$, 
computing $P(\square H_1,\ldots, \square H_n)$
is usually cheaper than computing $\square P(H_1,\ldots,H_n)$
by directly applying the statistical test.
Also, this ability to treat decided hypothesis
as if they were true or false makes the outcomes of
the test easier to interpret.

Besides the application of classical logic to
decided hypotheses, one can also use it between
different credal modalities.
For example, the implications in
\Cref{figure:hexa-agnostic,fig::invertibility,fig::monot,fig:consonance} are transitive.
This can be used to obtain new relations by combining
the implications in each of the figures.
For example, it follows from \Cref{fig::invertibility} that
$(\neg \Diamond H) \rightarrow (\square \bH )$.
Also, it follows from \Cref{figure:hexa-agnostic} that
$(\square \bH ) \rightarrow (\Diamond \bH )$. Therefore, 
one can conclude that $(\neg \Diamond H) \rightarrow (\square \bH )$.
Similarly, it follows from \Cref{fig::invertibility} that 
$(\square H) \rightarrow \neg (\Diamond \bH )$.
Also, it follows from the contrariety relation in \Cref{figure:hexa-agnostic}
that is obtained from \Cref{table:modalities} that
$(\Diamond \bH ) \rightarrow (\neg \nabla \bH )$.
By combining these results one can conclude that,
if $\square H$ is obtained, then 
$\nabla \bH $ is not obtained. 

Despite the advantages of working with
logically consistent hypothesis tests, 
several standard tests available in 
the statistical literature are 
logically inconsistent. 
Indeed, standard hypothesis tests can 
fail basic logical properties.
The following section investigates 
examples of logical inconsistency in 
frequentist and Bayesian statistics.

\section{Logical inconsistency of standard tests}
\label{section:standard-tests}

Hypothesis tests are often used to 
determine beliefs. Using \Cref{table:modalities}, 
one can translate each possible result of an
hypothesis test to a credal modality.
In case a hypothesis test does not satisfy all 	
the properties in \Cref{section:properties}, then 
we say the resulting logical system is incoherent.
Even though it is challenging to communicate and interpret these systems,
they are the ones that are generally obtained from standard hypothesis tests.
The next subsection revisits some known examples of 
logical inconsistencies in classical tests.

\subsection{Classical tests}

\begin{itemize}
 \item \textbf{Failure of invertibility}: Tests based on p-values treat
 the null and the alternative hypotheses differently.
 Indeed, the p-value is calculated using solely probabilities
 obtained under the null hypothesis;
 probabilities obtained under alternative hypotheses
 are left out.
 As a result, a test based on a p-value generally will not be invertible.
 
 \item \textbf{Failure of monotonicity}: Under regularity conditions, one can assume that if two individuals have the same genotype, then they will also have the same corresponding phenotypical characteristics.
However, \citet{Izbicki2012} shows that the
likelihood ratio test can reject the hypothesis that
two individuals have the same phenotype and not reject
the hypothesis that they have the same genotype.

\item \textbf{Failure of consonance}: 
Consider the standard framework of analysis of variance with three groups.
Let $\alpha_{1}$, $\alpha_{2}$ and $\alpha_{3}$
be the unknown means of each group.
\citet{Izbicki2015} shows that,
using the generalized likelihood ratio test,
one can conclude that 
``$\alpha_{1}=\alpha_{2}=\alpha_{3}=0$'' is impossible and, at the same time, 
conclude that ``$\alpha_{i}=0$'' is possible, 
for every $i \in \{1,2,3\}$.
\end{itemize} 

As an alternative to the above classical procedures,
one might consider a Bayesian hypothesis test.
One such hypothesis test
obtains the credal modality for a hypothesis
from its posterior probability.
Specifically, the credal modality is chosen according to cutoff levels
of the posterior probability.
For simplicity, these tests are called
``posterior probability cutoff tests''.
Since posterior probability tests are coherent from the perspective of Statistical Decision Theory, 
one might expect that they are also logically coherent.
This is not the case, as shown in the following section.

\subsection{Posterior probability cutoff tests}

Posterior probabilities can be used to define 
\textit{probabilistic modal operators} 
($\sqb$, $\dib$, $\nbb$ and $\dlb$), 
as explained in the following paragraphs. 
We use these special symbols to distinguish the 
probabilistic operators from the 
standard (alethic) operators that were used so far: $\square$, $\Diamond$, $\nabla$ and $\Delta$.  
    
A Bayesian statistical model considers two types of random variables:  a parameter $\theta$ in the parameter space $\Theta$,
and data, $X$, in the sample space, $\rchi$. 
While parameters correspond to unknown quantities, 
the data correspond to observable or observed quantities.  
 
The uncertainty about the model's variables
is described through probability statements.
Before observing the data, 
the joint distribution for $\theta$ and $X$ is 
denoted by $p(\theta,x)$.
By integrating $x$ out of the joint distribution, 
one obtains
\begin{align*}
 p(\theta)	&= \int_{\rchi}{p(\theta,x)dx}
\end{align*}
The distribution $p(\theta)$ is called the 
{\it prior} distribution for $\theta$,  
since it represents the uncertainty about $\theta$ 
before observing $X$.
 
After the value $x$ of $X$ is observed, 
the uncertainty about $\theta$ is updated
based on this observation.
One's uncertainty about $\theta$ changes from $p(\theta)$
to $p(\theta \g x)$, the probability of $\theta$ given $x$.
The latter term is called the 
{\it posterior} distribution for $\theta$
and can be obtained using Bayes Theorem:
\begin{align}
 \label{eq:bayes}
 p(\theta \g x)	&= \frac{p(\theta)p(x \g \theta)}{\int_{\Theta} p(\theta,x)d\theta}  
\end{align}
  
 \begin{table}
  \centering
  \begin{tabular}{|c|cc|}
   \hline
   \diagbox{Decision}{Truth} & $H$  & $\lnot H$	 \\
   \hline
   $\sqb H$ 	             & 0    & 1		     \\
   $\lnot \dib H$ 	     & a    & 0		     \\
   \hline 
  \end{tabular}
  \caption{Example of a loss function for a Bayesian hypothesis test.}
  \label{table:01a-loss}
 \end{table}

In order to attribute a credal modality to 
a hypothesis under scrutiny, 
one can use Bayesian decision theory \citep{Degroot2005}.
In this context, one chooses a credal modality
by minimizing the expected penalty that 
derives from a given \emph{loss function}.
\Cref{table:01a-loss} presents a loss function that
is typically used in Bayesian hypothesis tests.
If $H$ is true and one decides that 
$H$ is necessary, then the loss is $0$.
Similarly, if $H$ is false and one decides that
$H$ is impossible, then the loss is $0$.
However, if $H$ is true and one decides that 
$H$ is impossible, then the loss is $a$, where $a > 0$.
This type of decision is called a type-1 error.
Similarly, if $H$ is false and one decides that 
$H$ is necessary, then the loss is $1$.
This type of decision is called a type-2 error.
The constant $a$ defines how much a type-1 error is worse than
a type-2 error.
When $a=1$, both types of error are equally undesirable.
 
The optimal decision for 
the loss function in \Cref{table:01a-loss} is 
to choose $\sqb H$, when 
$p(H \g x) > (1+a)^{-1}$, and
$\lnot \dib H$, when 
$p(H \g x) < (1+a)^{-1}$.
In other words, the credal modality for 
each hypothesis is decided based on 
a cutoff of the posterior probability.
If the posterior probability of 
a hypothesis is sufficiently high,
then one concludes that it is necessary.
Otherwise, one concludes that 
the hypothesis is impossible.    
 
\begin{table}
 \centering
 \begin{tabular}{|c|cc|}
  \hline
  \diagbox{Decision}{Truth} & $H$  & $\lnot H$	 \\
  \hline
  $\sqb H$        & 0    & 1 \\
  $\nbb H$        & b    & b \\
  $\lnot \dib H$  & a    & 0 \\
  \hline
 \end{tabular}
 \caption{Example of a loss function for 
 an agnostic Bayesian hypothesis test.}
 \label{table:01ab-loss}
\end{table}
 
The loss function in \Cref{table:01a-loss} can
be extended to agnostic hypothesis tests,
as presented in \Cref{table:01ab-loss}.
In agnostic tests, one can choose to 
let the hypothesis be undecided,
that is, one can choose $\nbb H$.
Whenever this option is chosen, 
one commits a type-3 error and
incurs in a loss of $b$, where 
$0 < b < \min(a,1)$. 
This range of values for $b$ implies that,
by remaining agnostic, one loses
less than by taking a wrong decision and 
more than by taking a correct decision.
 
The optimal decision for this loss function is
similar to the one that was discussed for standard tests.
By taking $c_{1} = \max((1+a)^{-1},1-b)$ and $c_{2} = \min((1+a)^{-1},\frac{b}{a})$,
one obtains that the optimal test is:
\begin{align}
 \label{eqn:cutoff-test}
 \text{Choose credal modality}
 \begin{cases}
  \sqb H       & \text{, if } p(H \g x) > c_{1} \\
  \lnot \dib H & \text{, if } p(H \g x) < c_{2} \\
  \nbb H	   & \text{, otherwise.}
 \end{cases}
\end{align}
In words, the probabilistic modal operator is obtained by
comparing the posterior probability of the hypothesis, $H$, to 
the cutoffs, $c_{1}$ and $c_{2}$.
If $p(H \g x)$ is larger than the upper cutoff, $c_{1}$,
then the optimal decision is that $H$ is necessary.
If $p(H \g x)$ is smaller than the lower cutoff, $c_{2}$,
then the optimal decision is that $H$ is impossible.
Finally, for intermediate values of $p(H \g x)$,
$H$ remains undecided.
This test is illustrated in \Cref{fig:ripley}.

\begin{figure}
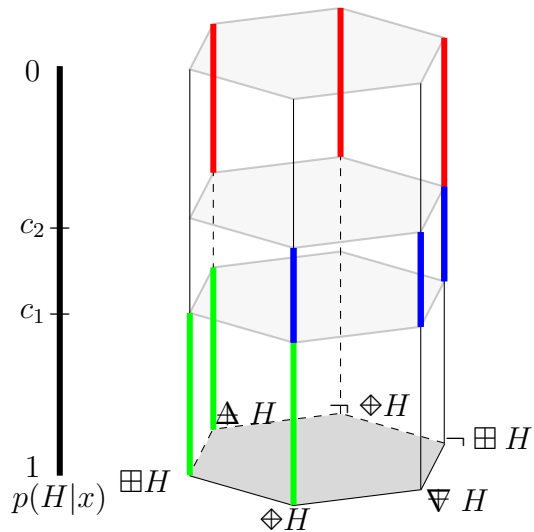

\centering
\ripley
\caption{Typical hexagon of statistical modalities for
         the posterior probability cutoff test, 
         given by \Cref{table:01ab-loss}.
         The colored edges indicate the chosen modalities
         as a function of the posterior probability of $H$,
         $p(H \g x)$.}
\label{fig:ripley}
\end{figure}

The posterior probability cutoff tests are logically incoherent.
Although the values of $a$ and $b$ can be chosen so that 
the tests are invertible and monotonic, 
these tests are not consonant.
The main argument for the lack of consonance can 
be presented in two steps.
First, the hypothesis $H_{0}: \theta \in \Theta$ 
has probability $1$.
Second, $H_{0}$ can generally be 
partitioned into a large collection of hypotheses,
$H_{1},\ldots,H_{n}$, such that 
each $P(H_{i} \g x)$ is arbitrarily small.
As a result, the cutoff tests obtain that 
$H_{0}$ is necessary and
$H_{1},\ldots,H_{n}$ are all impossible, that is,
they lack consonance with respect to the union operation.
A similar example shows that these tests also 
are not consonant with respect to intersection. 

The results of this section show that
several standard hypothesis tests cannot be made
logically consistent.
In the following section, we discuss how to obtain 
logically consistent hypothesis tests.

\section{Logically consistent agnostic hypothesis tests}
\label{section:region-tests}

A logically consistent agnostic hypothesis test satisfies
all the properties described in \Cref{section:properties}:
invertibility, monotonicity, union and intersection consonance,
and $\square \Theta$ is obtained.
\citet{Esteves2016} shows that 
every logically consistent agnostic testing scheme
can be obtained from a \emph{region estimator}
(a set of plausible values for $\theta$).
More precisely, for every logically consistent test,
there exists a region estimator, $S \subseteq \Theta$,
such that the test is of the form
\begin{align*}
 \mbox{Choose credal modality} \left\{
 \begin{array}{ll}
  \square H & \mbox{if} \  S \subseteq H \\
  \lnot \Diamond  H& \mbox{if} \  S \subseteq \bH   \\
  \nabla H  &  \mbox{if} \ S \cap H \neq \varnothing \mbox{ and }
  S \cap \bH \neq \varnothing
 \end{array}
 \right.
\end{align*}
The tests that are of the above type are called
\textit{region-based tests}.
Figure \ref{fig::region} illustrates such tests.		
\begin{figure}
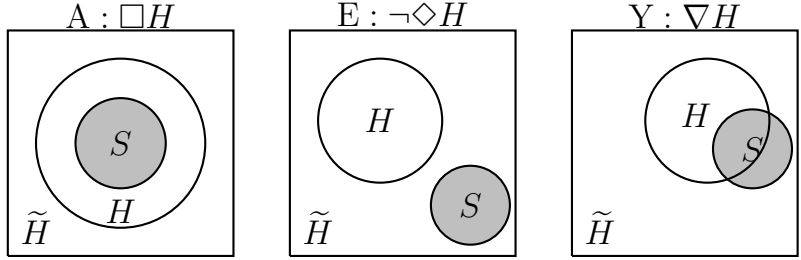

 \center
 \gfbstfig
 \mbox{} \vspace{-2mm} \mbox{} \\ 
 \caption{A test for hypothesis $H$ based on 
          the region estimator for $\theta$, $S$.}
 \label{fig::region}
\end{figure}
No matter what region estimator, $S$, is used,
the test based on $S$ is logically consistent \citep{Esteves2016}.
In order for the test to have additional statistical properties,
$S$ might be built using the Bayesian or the frequentist frameworks.
In the following,  we describe 
a particular type of region-based test:
the \emph{Generalized Full Bayesian Significance Test} (GFBST).
This test is an extension of the Full Bayesian Significance Test \citep{Pereira1999} (FBST) to agnostic hypothesis tests.

\subsection*{FBST and GFBST}
Credal modalities that are obtained from a possibilistic 
measure \citep{Dubois1982, Dubois2012} are defined as 
\textit{possibilistic} (or alethic) modal operators
($\square$, $\Diamond$, $\nabla$ and $\Delta$). 
Note that these symbols are different from
the ones used for the probability modal operators.
Hereafter, we study the possibilistic modal operators that
are obtained from the FBST and the GFBST.

The FBST is based on
the \emph{epistemic value} of 
the hypothesis of interest given 
the observed data, $\ev(H\g x)$.
The epistemic value is a transformation of
a probability into a possibility measure 
\citep{Borges2007, Stern2014}.
Specifically, $\ev(H\g x)$ is obtained through 
the steps:
\begin{enumerate}
 \item Define a surprise function,
 $s(\theta \g x) = \frac{p(\theta \g x)}{r(\theta)}$,
 where $r(\theta)$ is a \emph{reference} density over $\Theta$  \citep{stern2011}. The \textit{reference density} can be interpreted as 
an invariant measure under a relevant transformation group. For example,
the reference density may be obtained from the information geometry according to a metric on the parameter space \citep{Stern2011b}.
 
 \item Define the tangent set to $H$, $T(H)$, as
 $T(H)$, where, 
 \begin{align*}
  T(H) = \{\theta_1 \in \Theta \g  \forall \theta_{0} \in H, s(\theta_1 \g x) > s(\theta_0 \g x)\}\ 
 \end{align*}
 Note that $T(H) \cap H = \emptyset$, that is, the tangent set to an hypothesis and the hypothesis are disjoint. 
This aspect of the $e$-value's definition is
motivated by the legal principle known as  
\textit{in dubio pro reo} (most favorable interpretation, benefit of the doubt, or presumption of innocence) \citep{Stern2003}.
Intuitively, only parameter points that are 
more probable than every point of 
a given hypothesis can be admitted as 
witnesses against this hypothesis.  
    
 \item Obtain $\ev(H \g x) = 1-p\left(T(H) \g x \right)$.
 
 Alernatively, \Cref{lemma:sup-ev} in the appendix shows how
 $\ev(H \g x)$ can be obtained as a function of
 $\ev(\{\theta_0\} \g x)$,
 for each $\theta_0 \in H$.
\end{enumerate}

The FBST chooses $\square H$ 
when $\ev(H \g x) > c$, and
$\lnot \Diamond H$ 
when $\ev(H \g x) \leq c$, 
where $c \in (0,1)$ is a cutoff defined by 
the practitioner.
This procedure is similar to region-based tests.
Indeed, if $S=\{\theta_0 \in \Theta: \ev(\{\theta_0\} \g x) > c\}$, then
\Cref{thm:ev-region} shows that
the FBST has the form
\begin{align*}
 \mbox{Choose credal modality}  
 \left\{\begin{array}{ll}
  \lnot \Diamond H	& \mbox{, if} \ S \subseteq \bH \\
  \square H			&  \mbox{, otherwise.}			\\
 \end{array}
 \right.
\end{align*}

The GFBST extends the FBST into a region-based
agnostic hypothesis test. Let $S$ be such as in
the previous paragraph.
The GFBST has the form:
\begin{align*}
\mbox{Choose credal modality}  
 \left\{\begin{array}{ll}
  \lnot \Diamond  H& \mbox{, if} \  S \subseteq \bH \ (S \cap H = \emptyset) \\
  \square H & \mbox{, if} \  S \subseteq H \ (S \cap \bH = \emptyset) \\
  \nabla H  &  \mbox{, if} \ S \cap H \neq \varnothing \mbox{ and }
	S \cap \bH \neq \varnothing.
 \end{array}
 \right.
\end{align*}
While in the FBST, 
$S \cap H \neq \emptyset$ implies $\square H$,
in the GFBST this case can lead to 
either $\square H$ or $\nabla H$. Since this difference makes the GFBST a region-based test,
it is logically consistent. By applying \Cref{thm:ev-region}, the GFBST can also 
be written as a function of 
$\ev(H \g x)$ and $\ev(\bH \g x)$:
\begin{align}
 \label{eqn:gfbst-test}
 \mbox{Choose credal modality} 
 \left\{\begin{array}{ll}
  \lnot \Diamond  H 
  & \mbox{, if} \ \ev(H\g x) \leq c \\
  \square H
  & \mbox{, if} \ \ev(\bH\g x) \leq c \\
  \nabla H
  & \mbox{, otherwise.}
 \end{array}
 \right.
\end{align}
 
\textbf{Remark}: 
In previous references,
the tangent set to $H$ was defined as 
$T^{*}(H) = \{\theta_1 \in \Theta: s(\theta_1 \g x) > \sup_{\theta_0 \in H}s(\theta_0 \g x)\}$. 
Under regularity conditions that are 
usually found in statistical models
\citep[ex.3.23]{Izbicki2015},  
$p(T^*(H) \g x) = p(T(H) \g x)$.
Hence, in these cases, 
both definitions lead to the same results.
However, only $T(H)$ allows the 
assumption-free characterization 
in \Cref{eqn:gfbst-test}.  

The GFBST can be made more interpretable
using some properties of $\ev$.
Note that, for every $H$,
either $T(H) = \emptyset$ 
or $T(\bH) = \emptyset$.
Therefore, either $\ev(H \g x)=1$ or $\ev(\bH \g x)=1$.
When $\ev(H \g x)=1$ and $\ev(\bH \g x)=1$,
no matter what the value of $c$ is,
$\nabla H$ and $\nabla \bH$ are obtained.
When $\ev(H \g x)=1$ and $\ev(\bH \g x) < 1$,
either $\square H$ and $\lnot \Diamond \bH$ are obtained, or $\nabla H$ and $\nabla \bH$ are obtained. Similarly, if $\ev(\bH \g x)=1$ and $\ev(H \g x) < 1$, then either $\square \bH$ and $\lnot \Diamond H$ are obtained or $\nabla H$ and $\nabla \bH$ are obtained.
\Cref{sec::stackedCutoff} illustrates 
this behavior when 
$\ev(H \g x)=1$ and $\ev(\bH\g x) < 1$.
For every $c \in (0,1)$,
$\lnot \Diamond H$ is not obtained.
This behavior is different from the one in \Cref{fig:ripley}.

\begin{figure}
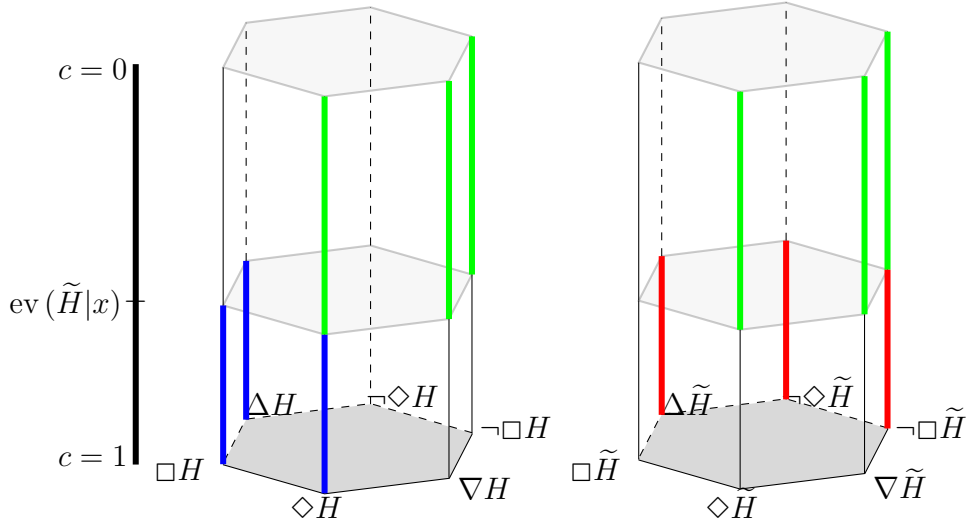

 \centering
 \gfbst
 \caption{Example of the behavior of the GFBST for a fixed sample $x$ as a function of the cutoff, $c$, when the supremum of the surprise function
is obtained only in $H$.
The prisms on the left and right side represent
the results obtained by the GFBST when applied, respectively, to $H$ and $\bH$.
The hexagon obtained from each horizontal cut represents the GFBST test that uses the corresponding cutoff.
The colored edges represent the credal modalities that are obtained from the GFBST as a function of the cutoff value.
 }
 \label{sec::stackedCutoff}
\end{figure}

\section{Hybrid hexagons: Relations between probabilistic and possibilistic modal operators}        
\label{section:hybrid}

Although the posterior probability test and the GFBST have different logical properties,   
It follows from the characterization of the GFBST in \Cref{section:region-tests}, 
that $\square H$ can be made 
a more stringent modality than $\dib H$.
Indeed, if $c < 0.5$ in \Cref{eqn:gfbst-test}, then
\Cref{thm:ev-prob} in the Appendix
proves that
\begin{align}
 \label{eq:gfbst-prob}
 \square H \Rightarrow p(H \g x) \geq 1-c \Rightarrow \Diamond H
\end{align}
Therefore, in \Cref{eqn:cutoff-test},
the cutoffs $c_1=1-c$ and $c_2=c$ yield
\begin{align*}
 \square H \Rightarrow \sqb H \Rightarrow \ \dib H \Rightarrow \Diamond H
\end{align*}
These relationships are summarized in the nested
hexagon of Figure \ref{figure:hexa-agnosticEncaixado}.
The above implications show that it is possible
to combine the two types of credal modality into 
a single hexagon of oppositions.
Since the hexagon of oppositions has two degrees of freedom
and the implication $\square H \Rightarrow \dib H$ holds,
it is possible to use $\square H$ as the necessity modality
and $\dib H$ as the possibility modality in a new 
hexagon of oppositions.
Similarly, since $\sqb H \Rightarrow \Diamond H$ holds,
one can use $\sqb H$ as the necessity modality and
$\Diamond H$ as the possibility modality in another hexagon of oppositions.
Similar hybrid hexagons have appeared in
\citet{Carnielli2008, Demey2014}.

\begin{figure}
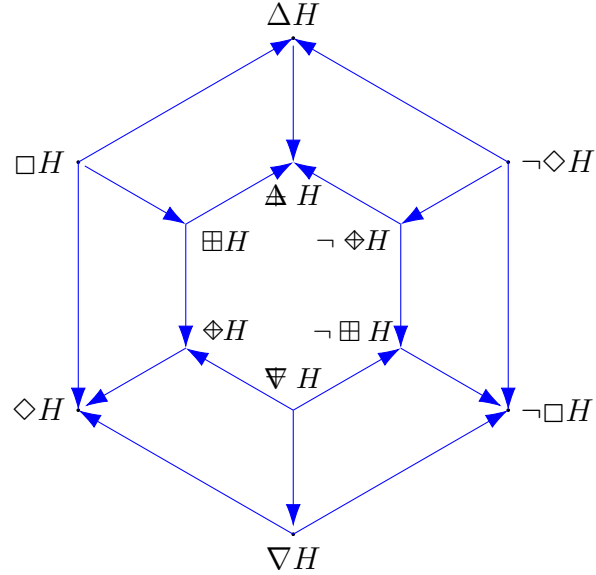

 \centering
 \hexagonoEncaixado
 \caption{Nested hexagon of opposition that shows the implication relations between  the probabilistic and possibilistic
 modal operators.}
 \label{figure:hexa-agnosticEncaixado}
\end{figure}

\begin{figure}
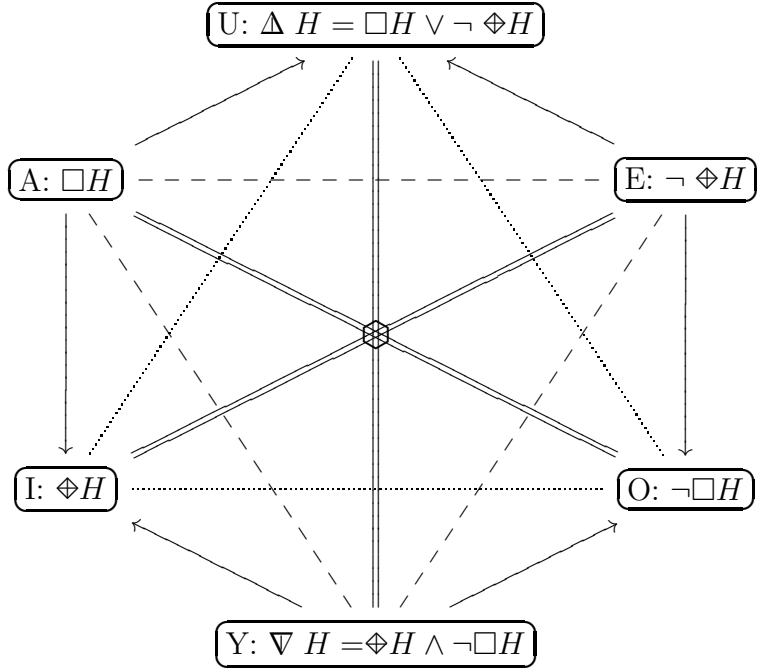

 \begin{align*}
  \hexagonoh
 \end{align*}
  \caption{Hybrid hexagon of oppositions that
  is obtained from combining probabilistic and possibilistic  
	 modalities.}
 \label{fig:hybrid-hexa}
\end{figure}

\Cref{fig:hybrid-hexa} illustrates 
the hybrid hexagon obtained using
$\square H$ and $\dib H$.
This hexagon summarizes several relations
between  $\square H$ and $\dib H$ that 
can be proved using the fact that
$\square H \Rightarrow \dib H$.
In particular, note that, if $p(H) = 0$,
then it follows from \Cref{eq:bayes} that $p(H \g x)=0$.
Therefore, it follows from \Cref{eqn:cutoff-test} that 
$\lnot \dib H$ holds.
Using the contrariety relation in \Cref{fig:hybrid-hexa},
one can conclude that $\square H$ does not hold.
In summary, if a hypothesis, $H$, is such that $p(H)=0$, then
$H$ is not accepted by the GFBST.

The previous conclusion is compatible with a common idea in Statistics: 
if $p(H) = 0$, then one cannot accept $H$; 
one can only fail to reject $H$ \citep{Gigerenzer2004, Inman1994}. 
Indeed, if $p(H) = 0$, then
$\lnot \square H$ is obtained, that is, 
the GFBST cannot accept $H$. However, 
even though $p(H) = 0$, it is possible to 
obtain $\Diamond H$, that is, the GFBST can fail to reject $H$.
However, when $p(H) =0 $, then
$p(H \g x)=0$ and 
$\lnot \dib H$ is obtained,
that is, the posterior probability test always rejects $H$.

The above result might be surprising,
given the importance of null probability hypotheses 
in the theoretical and empirical sciences \citep{Stern2015}. 
For example, if $\Theta = \mathbb{R}^{d}$, then
it is common to find hypotheses in the form of linear equations, 
$H: A\theta = b$, where matrix $A$ is $h \times d$.
This case includes, for example, the simple null hypothesis $H: \theta = c$. 
In case the distribution of $\theta$ is continuous, then
$p(H) = 0$ and, for every $x$, $p(H \g x) = 0$.
Therefore, while the probability cutoff test always obtains $\lnot \dib H$ for precise hypotheses,
the GFBST can either 
obtain $\nabla H$ or $\lnot \Diamond H$.
That is, while the credal modality obtained from the probability cutoff test for precise hypotheses is known beforehand, 
the GFBST allows one to use data to 
revisit their beliefs
regarding these hypotheses.

\section{Conclusions and Final Remarks}

 We show how the hexagon of oppositions can be a useful tool to explain consistency properties of statistical hypothesis tests. Indeed, geometric solids composed of hexagons of oppositions illustrate the conditions for a statistical hypothesis test to be logically consistent. Also, prisms composed of hexagons of oppositions explain the definitions of hypothesis tests such as the probability cutoff test and the GFBST. A hybrid hexagon of oppositions summarizes the logical relations between these tests. 
 In summary, the hexagon of opposition can be used as a powerful form of diagrammatic representation that is helpful in organizing, displaying and explaining complex logical properties of multiple statistical tests of hypothesis and their intricate inter-relationships.      
 
{\bf Acknowledgments:} 
 The authors are grateful for the support of IME-USP, the Institute of Mathematics and Statistics of the University of S\~{a}o Paulo, and the Department of Statistics of UFSCar - The Federal University of S\~{a}o Carlos. 
 Julio M. Stern is grateful to 
FAPESP - the State of S\~{a}o Paulo Research Foundation (grants CEPID 2013/07375-0 and CEPID 2014/50279-4); and CNPq - the Brazilian National Counsel of Technological and Scientific Development (grants PQ 301206/2011-2 and PQ 301892/2015-6). 
 Rafael Izbicki is grateful for the support of FAPESP (grants 2014/25302-2 and 2017/03363-8). 
 Finally, the authors are grateful for advice and comments received from anonymous referees, and from participants of the 5th World Congress on the Square of Opposition, held on November 11-15, 2016, at Hanga Roa, Rapa Nui, Chile, having as main organizer Jean-Yves B\'{e}ziau; and the XVIII Brazilian Logic Conference, held on May 8-12, 2017, at Piren\'{o}polis, Goi\'{a}s, Brazil, co-chaired by Marcelo Coniglio, Itala D'Ottaviano and Wagner Sanz.

\bibliographystyle{apalike} 
\bibliography{hexa}

\vspace{-0.5cm}

\section*{Proofs}

\begin{lemma}
 \label{lemma:logical}
 If $\Delta H_1$ and $\Delta H_2$ are obtained
 in a logically consistent hypothesis test, then
 $\square(H_1 \uparrow H_2)$ is obtained
 if and only if 
 $\square H_1 \uparrow \square H_2$ is obtained.
\end{lemma}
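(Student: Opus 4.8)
The plan is to transfer the statement to the region-estimator picture recalled in \Cref{section:region-tests}. By \citet{Esteves2016}, a logically consistent test is a region-based test: there is a region estimator $S \subseteq \Theta$ such that $\square H$ is obtained iff $S \subseteq H$; $\lnot \Diamond H$ is obtained iff $S \cap H = \emptyset$; and $\nabla H$ is obtained iff $S \cap H \neq \emptyset$ and $S \cap \bH \neq \emptyset$. Moreover $S \neq \emptyset$, since $S=\emptyset$ would make $S\subseteq H$ and $S\cap H=\emptyset$ hold for every $H$, forcing $\mathcal{L}(H)$ to equal both $0$ and $1$; equivalently, this is excluded by the requirement that $\square\Theta$ be obtained. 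All reasoning below takes place inside this fixed, nonempty $S$.

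First I would translate the three quantities appearing in the lemma. Since $\Delta H_i$ is $\lnot \nabla H_i$, the characterization gives: \emph{$\Delta H_i$ is obtained iff $S \subseteq H_i$ or $S \cap H_i = \emptyset$}, i.e.\ $S$ lies wholly inside $H_i$ or wholly outside it. Because $H_1 \uparrow H_2 = \Theta \setminus (H_1 \cap H_2)$, we get that $\square(H_1 \uparrow H_2)$ is obtained iff $S \subseteq \Theta \setminus (H_1 \cap H_2)$, i.e.\ iff $S \cap H_1 \cap H_2 = \emptyset$. Finally, by the definition of $\uparrow$ on truth values, $\square H_1 \uparrow \square H_2$ is obtained iff it is not the case that both $\square H_1$ and $\square H_2$ are obtained, i.e.\ iff not $(S \subseteq H_1 \text{ and } S \subseteq H_2)$, i.e.\ iff $S \not\subseteq H_1 \cap H_2$. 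So the lemma reduces to the purely set-theoretic equivalence $S \cap H_1 \cap H_2 = \emptyset \iff S \not\subseteq H_1 \cap H_2$, under the standing assumption that $S$ lies wholly inside or wholly outside each of $H_1$ and $H_2$.

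Then I would prove this equivalence. The direction $S \cap H_1 \cap H_2 = \emptyset \Rightarrow S \not\subseteq H_1 \cap H_2$ uses only $S \neq \emptyset$. For the converse, suppose $S \not\subseteq H_1 \cap H_2$ and pick $\theta \in S$ with $\theta \notin H_1 \cap H_2$; then $\theta \notin H_j$ for some $j \in \{1,2\}$, so $S \not\subseteq H_j$, and since $\Delta H_j$ holds this forces $S \cap H_j = \emptyset$, whence $S \cap H_1 \cap H_2 \subseteq S \cap H_j = \emptyset$. This converse is the only step that consumes the hypothesis $\Delta H_1 \wedge \Delta H_2$, and it is the crux of the argument: if $S$ were allowed to straddle $H_j$, then $S \cap H_1 \cap H_2$ could be a nonempty proper subset of $S$ and the equivalence — hence the lemma — would fail (take $S = \{a,b\}$, $H_1 = \{a\}$, $H_2 = \Theta$). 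The remaining steps are routine unwinding of the definitions of $\uparrow$ and of ``obtained'', so I expect no further difficulty. If desired, an induction iterating this two-variable statement, together with the functional completeness of $\{\uparrow\}$, yields the displayed general form $\square P(H_1,\dots,H_n) \iff P(\square H_1,\dots,\square H_n)$ for decided $H_1,\dots,H_n$; but the lemma as stated needs only the case above.
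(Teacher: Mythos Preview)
Your proof is correct, but it takes a genuinely different route from the paper's. The paper argues directly from the defining axioms of logical consistency: for the forward implication it passes from $\square(\Theta\setminus(H_1\cap H_2))$ to $\lnot\Diamond(H_1\cap H_2)$ via invertibility, then to $\lnot\square(H_1\cap H_2)$ via the hexagon, and finally applies intersection consonance; for the backward implication it uses the $\Delta H_j$ hypothesis to upgrade $\lnot\square H_j$ to $\lnot\Diamond H_j$, then applies monotonicity and invertibility. You instead invoke the structural characterization from \citet{Esteves2016} (every logically consistent test is region-based) and reduce the lemma to the elementary set-theoretic fact that, for nonempty $S$ lying wholly inside or wholly outside each $H_i$, one has $S\cap H_1\cap H_2=\emptyset \iff S\not\subseteq H_1\cap H_2$. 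The paper's argument is more informative about \emph{which} consistency condition drives each direction (consonance forward, monotonicity backward, invertibility both), which matches the surrounding discussion in \Cref{section:properties}; your argument is shorter and more transparent once the region characterization is granted, but it imports that theorem as a black box. Both proofs correctly isolate the hypothesis $\Delta H_1\wedge\Delta H_2$ as needed only for the backward direction. One small quibble: your clause ``equivalently, this is excluded by the requirement that $\square\Theta$ be obtained'' does not literally justify $S\neq\emptyset$, since $S\subseteq\Theta$ is vacuous; your first reason (that $S=\emptyset$ makes the three cases of the region test overlap, so $\mathcal{L}$ is not a function) is the correct one and already suffices.
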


\begin{proof}
 If $\square(H_1 \uparrow H_2)$ is obtained,
 then $\square(\Theta - (H_1 \cap H_2))$ is obtained.
 It follows from invertibility that
 $\neg \Diamond(H_1 \cap H_2)$ is obtained.
 Therefore, it follows from an implication relation in
 \Cref{figure:hexa-agnostic} that
 $\neg \square(H_1 \cap H_2)$ is obtained.
 Hence, using intersection consonance, 
 either $\neg \square H_1$ or 
 $\neg \square H_2$ is obtained.
 Conclude from a contradiction relation in
 \Cref{figure:hexa-agnostic} that
 either $\square H_1$ is not obtained or 
 $\square H_2$ is not obtained, that is,
 $\square H_1 \uparrow \square H_2$ is obtained.
 
 If $\square H_1 \uparrow \square H_2$ is obtained, then
 either $\square H_1$ is not obtained or 
 $\square H_2$ is not obtained.
 Hence, using a contradiction relation in
 \Cref{figure:hexa-agnostic},
 either $\neg \square H_1$ or 
 $\neg \square H_2$ is obtained.
 Since by hypothesis $\Delta H_1$
 and $\Delta H_2$ are obtained,
 it follows from \Cref{table:modalities} that
 either $\neg \Diamond H_1$ or
 $\neg \Diamond H_2$ is obtained.
 Therefore, using monotonicity,
 $\neg \Diamond (H_1 \cap H_2)$ is obtained.
 It follows from invertibility that
 $\square (\Theta - (H_1 \cap H_2))$ is obtained,
 that is, $\square (H_1 \uparrow H_2)$ is obtained.
\end{proof}

\begin{lemma}
 \label{lemma:sup-ev}
 $\ev(H \g x) = \sup_{\theta_0 \in H}\ev(\{\theta_0\} \g x)$
\end{lemma}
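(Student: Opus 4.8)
The plan is to translate the identity into the language of tangent sets. Recall that $\ev(H \g x) = 1 - p(T(H) \g x)$ and $\ev(\{\theta_0\} \g x) = 1 - p(T(\{\theta_0\}) \g x)$, and that $\sup_{\theta_0 \in H}\bigl(1 - p(T(\{\theta_0\}) \g x)\bigr) = 1 - \inf_{\theta_0 \in H} p(T(\{\theta_0\}) \g x)$. Hence the lemma is equivalent to the single identity
\[
 p(T(H) \g x) = \inf_{\theta_0 \in H} p\bigl(T(\{\theta_0\}) \g x\bigr),
\]
and the whole argument reduces to proving this.

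Two elementary observations set it up. First, $\theta_1 \in T(H)$ iff $s(\theta_1 \g x) > s(\theta_0 \g x)$ for every $\theta_0 \in H$, i.e. iff $\theta_1 \in T(\{\theta_0\})$ for every such $\theta_0$; thus $T(H) = \bigcap_{\theta_0 \in H} T(\{\theta_0\})$. Second, since $T(\{\theta_0\}) = \{\theta_1 : s(\theta_1 \g x) > s(\theta_0 \g x)\}$ is a strict super-level set of $s(\cdot \g x)$, the family $\{T(\{\theta_0\})\}_{\theta_0 \in H}$ is a chain, nested by reverse inclusion and ordered by the value of $s(\theta_0 \g x)$; consequently $T(H)$ itself will turn out to be a level set of $s(\cdot \g x)$, so all the sets in sight are measurable and $p(T(H)\g x)$ is well defined. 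Because $p(\cdot \g x)$ is monotone, the inequality ``$\le$'' in the displayed identity is immediate from $T(H) \subseteq T(\{\theta_0\})$.

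For ``$\ge$'' I would distinguish two cases according to whether $\sigma^{*} := \sup_{\theta_0 \in H} s(\theta_0 \g x)$ is attained. If it is, say at $\theta_0^{*} \in H$, then $s(\theta_1 \g x) > s(\theta_0^{*} \g x)$ already forces $s(\theta_1 \g x) > s(\theta_0 \g x)$ for all $\theta_0 \in H$, so $T(H) = T(\{\theta_0^{*}\})$; and by monotonicity of $p$ along the chain the infimum of $p(T(\{\theta_0\}) \g x)$ is attained at $\theta_0^{*}$ too, so the two sides agree. If $\sigma^{*}$ is not attained, pick $\theta_0^{(n)} \in H$ with $s(\theta_0^{(n)} \g x)$ strictly increasing to $\sigma^{*}$; then the sets $T(\{\theta_0^{(n)}\})$ decrease. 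The crucial step is to verify $T(H) = \bigcap_n T(\{\theta_0^{(n)}\})$: the inclusion $\subseteq$ is clear, and for $\supseteq$, any $\theta_1$ in the countable intersection satisfies $s(\theta_1 \g x) \ge \sigma^{*}$, and then, \emph{precisely because $\sigma^{*}$ is not attained}, $s(\theta_1 \g x) \ge \sigma^{*} > s(\theta_0 \g x)$ for every $\theta_0 \in H$, i.e. $\theta_1 \in T(H)$. Continuity of $p(\cdot \g x)$ from above then gives $p(T(H) \g x) = \lim_n p(T(\{\theta_0^{(n)}\}) \g x) = \inf_n p(T(\{\theta_0^{(n)}\}) \g x)$, and since each $\theta_0 \in H$ has $s(\theta_0 \g x) < s(\theta_0^{(n)} \g x)$ for some $n$ (so $T(\{\theta_0^{(n)}\}) \subseteq T(\{\theta_0\})$), this last infimum equals $\inf_{\theta_0 \in H} p(T(\{\theta_0\}) \g x)$, which closes the argument.

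The main obstacle is precisely the non-attained case: one has to replace the a priori uncountable intersection $\bigcap_{\theta_0 \in H} T(\{\theta_0\})$ by a countable one along an approximating sequence before continuity of the measure can be invoked, and the identification $T(H) = \bigcap_n T(\{\theta_0^{(n)}\})$ is exactly where the interplay between non-attainment of the supremum and the strict inequality in the definition of $T(\cdot)$ is needed; everything else is a containment or a monotonicity remark. I would also note in passing that $H$ is taken nonempty (if $H = \varnothing$ then $T(H) = \Theta$ and $\ev(H \g x) = 0$, consistent with the convention $\sup \varnothing = 0$ on $[0,1]$).
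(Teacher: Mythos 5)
Your proof is correct and follows essentially the same route as the paper's: write $T(H)=\bigcap_{\theta_0\in H}T(\{\theta_0\})$ and pass from the probability of this intersection to the infimum of the probabilities, then take complements. The paper's proof simply asserts the step $p\bigl(\bigcap_{\theta_0\in H}T(\{\theta_0\})\g x\bigr)=\inf_{\theta_0\in H}p\bigl(T(\{\theta_0\})\g x\bigr)$ as one equality in its chain, whereas you justify it (correctly) via the nested super-level-set structure, the attained/non-attained dichotomy for $\sup_{\theta_0\in H}s(\theta_0\g x)$, and continuity of $p(\cdot\g x)$ from above along a countable subsequence — useful added rigor, but not a different approach.
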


\begin{proof}
 \begin{align*}
  \ev(H \g x) 
  &= 1-p(\{\theta_1 \in \Theta \g  \forall \theta_{0} \in H, s(\theta_1 \g x) > s(\theta_0 \g x)\}\ \g x) \\
  &= 1-p(\cap_{\theta_{0} \in H} \{\theta_1 \in \Theta \g s(\theta_1 \g x) > s(\theta_0 \g x)\}\ \g x) \\
  &= 1-\inf_{\theta_{0} \in H}p(\{\theta_1 \in \Theta \g s(\theta_1 \g x) > s(\theta_0 \g x)\}\ \g x) \\
  &= 1-\inf_{\theta_{0} \in H}p(T(\{\theta_0\})\g x) = \sup_{\theta_{0} \in H}ev(\{\theta_0\} \g x)\\
 \end{align*}
\end{proof}

\begin{theorem}
 \label{thm:ev-region}
 Let $S = \{\theta_{0} \in \Theta: \ev(\{\theta_{0}\} \g x) > c\}$. For every $H \subseteq \Theta$, $\ev(H \g x) \leq c$ if and only if $H \cap S = \emptyset$.
\end{theorem}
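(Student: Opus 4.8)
The plan is to unwind the definitions of $S$ and of $\ev(H \g x)$ and reduce the statement to a clean set-theoretic characterization of the tangent set $T(H)$. First I would recall from \Cref{lemma:sup-ev} that $\ev(H \g x) = \sup_{\theta_0 \in H} \ev(\{\theta_0\} \g x)$. This is the crucial bridge: it converts a statement about the epistemic value of the composite hypothesis $H$ into a statement about the epistemic values of its singletons, which is exactly the currency in which $S$ is defined.

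Next I would argue the equivalence directly in both directions. Suppose $\ev(H \g x) \le c$. Then by \Cref{lemma:sup-ev}, $\sup_{\theta_0 \in H} \ev(\{\theta_0\} \g x) \le c$, so every $\theta_0 \in H$ satisfies $\ev(\{\theta_0\} \g x) \le c$, hence $\theta_0 \notin S$; this gives $H \cap S = \emptyset$. Conversely, suppose $H \cap S = \emptyset$. Then for every $\theta_0 \in H$ we have $\ev(\{\theta_0\} \g x) \le c$, so the supremum over $H$ is at most $c$, i.e.\ $\ev(H \g x) \le c$. The only subtlety is whether the supremum of a set of numbers all $\le c$ can itself exceed $c$ — it cannot, since $c$ is an upper bound, so $\sup \le c$. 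One should also note the degenerate case $H = \emptyset$: then $T(H) = \Theta$ by the vacuous quantifier in the definition, so $\ev(\emptyset \g x) = 1 - p(\Theta \g x) = 0 \le c$ (as $c \in (0,1)$), and $\emptyset \cap S = \emptyset$ trivially, so the equivalence holds in this case too.

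I expect the main obstacle to be purely bookkeeping: making sure the inequality in the definition of $S$ (strict, $\ev(\{\theta_0\}\g x) > c$) lines up correctly with the inequality in the statement (non-strict, $\ev(H \g x) \le c$), so that ``$\theta_0 \notin S$'' is genuinely equivalent to ``$\ev(\{\theta_0\} \g x) \le c$'' with no boundary mismatch. Since $S$ is defined by strict inequality, its complement is defined by the non-strict one, and the $\sup$ characterization respects this precisely. Once \Cref{lemma:sup-ev} is invoked, there is essentially nothing left to prove beyond this observation, so the argument is short; the real content was already isolated in that lemma.
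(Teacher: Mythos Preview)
Your proposal is correct and follows exactly the same approach as the paper's own proof: invoke \Cref{lemma:sup-ev} to rewrite $\ev(H \g x)$ as $\sup_{\theta_0 \in H}\ev(\{\theta_0\}\g x)$, and then observe that this supremum is $\le c$ precisely when no point of $H$ lies in $S$. The paper's version is terser (two sentences), whereas you additionally spell out the strict/non-strict inequality bookkeeping and the vacuous case $H=\emptyset$, but the substance is identical.
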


\begin{proof}
 Note that $\ev(H \g x) \leq c$ if and only if
 $\sup_{\theta_0 \in H}\ev(\{\theta_0\} \g x) \leq c$ (\Cref{lemma:sup-ev}).
 That is, $\ev(H \g x) \leq c$ if and only if
 $S \cap H = \emptyset$.
\end{proof}

\begin{theorem}
 \label{thm:ev-prob}
 If $c < 0.5$ in 
 \Cref{eqn:gfbst-test}, then
 \begin{align*}
  \square H \Rightarrow 
  p(H \g x) \geq 1-c \Rightarrow 
  p(H \g x) > c \Rightarrow 
  \Diamond H
 \end{align*}
\end{theorem}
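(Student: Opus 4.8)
The plan is to verify the three implications from left to right, using the characterization of the GFBST in \Cref{eqn:gfbst-test}, the identity $\ev(H \g x) = 1 - p(T(H) \g x)$, and the disjointness $T(H) \cap H = \emptyset$ recorded in the Remark before \Cref{eqn:gfbst-test}. The single structural fact I would extract and reuse is that, since $T(H)$ is disjoint from $H$, we have $T(H) \subseteq \bH$ and, applying this to $\bH$ in place of $H$, also $T(\bH) \subseteq H$; together with monotonicity of the posterior probability $p(\,\cdot\, \g x)$, this turns thresholds on $\ev$ into bounds on $p(H \g x)$.

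For the first implication, suppose $\square H$ is obtained. Reading \Cref{eqn:gfbst-test} (the three conditions being mutually exclusive, since at most one of $\ev(H \g x)$, $\ev(\bH \g x)$ can be $\leq c < 1$, as one of them equals $1$), this forces $\ev(\bH \g x) \leq c$, i.e., $p(T(\bH) \g x) = 1 - \ev(\bH \g x) \geq 1 - c$. Since $T(\bH) \subseteq H$, monotonicity gives $p(H \g x) \geq p(T(\bH) \g x) \geq 1 - c$. The middle implication is immediate: $c < 0.5$ gives $1 - c > 0.5 > c$, so $p(H \g x) \geq 1 - c$ yields $p(H \g x) > c$. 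For the last implication, suppose $p(H \g x) > c$. From $T(H) \subseteq \bH$ and monotonicity, $p(T(H) \g x) \leq p(\bH \g x) = 1 - p(H \g x)$, hence $\ev(H \g x) = 1 - p(T(H) \g x) \geq p(H \g x) > c$. By \Cref{eqn:gfbst-test}, $\lnot \Diamond H$ can only be obtained when $\ev(H \g x) \leq c$; since $\ev(H \g x) > c$, $\lnot \Diamond H$ is not obtained, and therefore, by \Cref{table:modalities}, $\Diamond H$ is obtained.

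I do not expect a genuine obstacle here; the argument is only a few lines. The two points that need care are bookkeeping rather than substance: first, that ``$\square H$ is obtained'' really is equivalent to ``$\ev(\bH \g x) \leq c$'' (and not merely consistent with it), which rests on the three cases of \Cref{eqn:gfbst-test} being exhaustive and mutually exclusive because one of $\ev(H \g x)$, $\ev(\bH \g x)$ is always $1$; and second, keeping the direction of the inclusions $T(H) \subseteq \bH$ and $T(\bH) \subseteq H$ straight, both of which are nothing more than $T(\,\cdot\,) \cap (\,\cdot\,) = \emptyset$ rephrased.
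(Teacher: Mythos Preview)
Your proposal is correct and follows essentially the same route as the paper: both arguments rest on the inclusions $T(\bH) \subseteq H$ and $T(H) \subseteq \bH$ (equivalently, $T(\cdot)\cap(\cdot)=\emptyset$), pass these through monotonicity of $p(\,\cdot\,\g x)$ to obtain $1-\ev(\bH\g x)\leq p(H\g x)$ and $1-\ev(H\g x)\leq p(\bH\g x)$, and then read off the three implications from the $\ev$-threshold description of $\square$ and $\Diamond$ in \Cref{eqn:gfbst-test}. Your extra remark about mutual exclusivity of the three cases is not needed, since $\square H$ being obtained already entails $\ev(\bH\g x)\leq c$ directly from the first line of \Cref{eqn:gfbst-test}, but it does no harm.
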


\begin{proof}
 For every $\Theta_{H} \subset \Theta$,
 \begin{align}
  1-\ev(\bH \g x) 
  &=p(\theta \in T(\bH ) \g x)  \nonumber \\
  &= p(\theta \in \{\theta_1 \in \Theta \g  \forall \theta_{0} \in \bH , s(\theta_1 \g x) > s(\theta_0 \g x)\}\ \g x) \nonumber \\
  &\leq p(\theta \in \Theta_{H} \g x)
  = p(H \g x), \label{eq:ev-inequality-1}
 \end{align}
 where the last inequality follows from the fact that
 \begin{align*}
  \{\theta_1 \in \Theta \g  \forall \theta_{0} \in \bH , s(\theta_1 \g x) > s(\theta_0 \g x)\}
  &\subseteq \Theta-\Theta_{\bH } = \Theta_{H}.
 \end{align*}
 Similarly, one can obtain
 \begin{align}
  \label{eq:ev-inequality-2}
  1-\ev(H \g x) \leq p(\bH  \g x)
 \end{align}
 
 Assume that $\square H$ is
 obtained. It follows from 
 \Cref{eqn:gfbst-test} that 
 $\ev(\bH \g x) \leq c$, that is,
 $1-\ev(\bH \g x) \geq 1-c$.
 Conclude from \Cref{eq:ev-inequality-1}
 that $p(H\g x) \geq 1-c$.
 Then, since $c<0.5$,
it holds that $p(H | x) > c$.
 Therefore, $p(\bH \g x) < 1-c$.
 Conclude from \Cref{eq:ev-inequality-2}
 that $1-\ev(H \g x) < 1-c$, that is,
 $\ev(H \g x) > c$.
 It follows from \Cref{eqn:gfbst-test} that
 $\Diamond H$ is obtained.
 \vspace{-1cm}
\end{proof}

\end{document}